\theoremstyle{definition}
\newtheorem{thm}{Theorem}[section]
\newtheorem{cor}[thm]{Corollary}
\newtheorem{defn-lem}[thm]{Definition-Lemma}
\newtheorem{prop}[thm]{Proposition}
\newtheorem{question}[thm]{Question}
\newtheorem{rem}[thm]{Remark}
\newtheorem{defn}[thm]{Definition}
\numberwithin{equation}{section}
\def \N{{\mathbb N}}
\def \C{{\mathbb C}}
\def \P{\mathbb P}
\def\map#1.#2.{#1 \longrightarrow #2}
\def\rmap#1.#2.{#1 \dasharrow #2}
\DeclareMathOperator{\gen}{gen}
\DeclareMathOperator{\Spec}{Spec}
\DeclareMathOperator{\im}{Im}
\DeclareMathOperator{\Frac}{Frac}
\def\fb#1.{\underset #1 \to \times}
\def\pr#1.{\Bbb P^{#1}}
\def\ring#1.{\mathcal O_{#1}}
\def\mlist#1.#2.{{#1}_1,{#1}_2,\dots,{#1}_{#2}}
\def\uloopr#1{\ar@'{@+{[0,0]+(-4,5)} @+{[0,0]+(0,10)}
@+{[0,0]+(4,5)}}
  ^{#1}}
\def\dloopr#1{\ar@'{@+{[0,0]+(-4,-5)} @+{[0,0]+(0,-10)}
@+{[0,0]+(4,-5)}}
  _{#1}}
\def\rloopd#1{\ar@'{@+{[0,0]+(5,4)} @+{[0,0]+(10,0)}
@+{[0,0]+(5,-4)}}
  ^{#1}}
\def\lloopd#1{\ar@'{@+{[0,0]+(-5,4)} @+{[0,0]+(-10,0)}
@+{[0,0]+(-5,-4)}}
  _{#1}}
\long\def\ignore#1{}
\long\def\ignore#1{#1}
\title{Semi-stable fibrations of generic $p$-rank 0}
\author{Junmyeong Jang}
\date{email : jang3@math.purdue.edu}
\begin{document}

\maketitle
\begin{center}
Mathematics Subject Classification : 11G25, 14J20
\end{center}

\medskip
     \section{Introduction}
     Let $k$ be an algebraically closed field and $\pi : X \to C$ be a semi-stable fibration of
     a connected proper smooth surface to a connected proper smooth curve over $k$.
     If the base field $k$ is a subfield of $\C$, the filed of complex numbers, the following semi-positivity theorem holds.
     \\ \\
     $\bf{Theorem.}$(Semi-Positivity Theorem, Xiao) If $\pi :X \to C$ is a fibration of a proper smooth surface to a proper smooth
     curve over $\C$, then all the quotient bundles of $\pi _{*}\omega _{X/C}$ are of non-negative degree.
     \cite{X},p.1\\ \\
     In general the semi-positivity theorem is not valid
     over a field of positive characteristic. In \cite{MB}, Moret-Bailly constructed a semi-stable fibration
     $\pi _{M} : X_{M} \to \P ^{1}$ of fiber genus 2 such that $R^{1} \pi _{M*} \mathcal{O}_{X_{M}} =
     \mathcal{O}(1) \oplus \mathcal{O}(-p)$ where $p$ is the characteristic of the base filed.
     In the previous work,\cite{J} we have proved that for a semi-stable fibration
     $\pi : X \to C$, if the generic fiber is ordinary, then the semi-positivity
     theorem holds. Precisely, when the generic fiber of $\pi$ is ordinary, all the Harder-Narasimhan slopes of
     $R^{1} \pi _{*} \mathcal{O}_{X}$ are non-positive. For Moret-Bailly's example, the $p$-rank of the generic fiber
     is 0. In particular, every special fiber of $\pi _{M}$ is a supersingular smooth curve of genus 2 or a union of
     two supersingular elliptic curves which intersect at a point transversally. In this paper we prove the following theorem
     which generalizes the failure of the semi-positivity theorem for Moret-Bailly's example.
     \\ \\
     $\bf{Theorem \ 1.}$
     Let $\pi : X \to C$ be a non-isotrivial semi-stable fibration of proper smooth surface to a proper smooth curve over
     a field of positive characteristic. If the generic $p$-rank of $\pi$ is 0, then
     $F _{C} ^{n*} R^{1} \pi _{*} \mathcal{O}_{X}$ has a positive Harder-Narasimhan slope
     for a sufficiently large $n \in \N$. In particular, if the genus of $C$ is 0 or 1, $R^{1} \pi _{*} \mathcal{O}_{X}$
     has a positive Harder-Narasimhan slope.\\ \\
     As an application of the theorem, we obtain a result on a distribution of $p$-ranks of reductions of a certain non-closed point
     in the moduli space of curves over a number field.\\ \\
     $\bf{Corollary \ 2.8.}$
     Suppose that $\pi : X \to C$ is a non-isotrivial
     semi-stable fibration of base genus 0 or 1 defined over a number field $F$,
     and that $U \subseteq C$ is the smooth locus of $\pi$.
     $\pi$ defines a non-constant morphism $f:U \to \mathcal{M} _{g,F}$. Let $P$ be the image of
     the generic point of $U$ under $f$. Then the reduction $P _{\upsilon}$ is not contained in
     the $p$-rank 0 strata for almost all $\upsilon$.\\ \\
     This result can be considered as a variation of Serre's ordinary reduction conjecture. It is a weak statement since
     it is only about some non-closed points in the moduli space and 0 $p$-rank. But it is a somewhat interesting
     phenomenon that the semi-positivity theorem, which is concerned with a coherent module on a fiber of characteristic 0,
     encodes an information of $p$-ranks of the reductions.

     \section{Proof of Theorem 1.}
     We follow the terminology of $\cite{J}$. Let us recall the definition of a semi-stable curve.
     Let $k$ be an algebraically closed field and $C$ be a projective curve over $k$.
     \begin{defn}
     $C$ is (semi-)stable if
     \begin{itemize}
     \item[1.] It is connected and reduced.
     \item[2.] All the singular points are normal crossing.
     \item[3.] An irreducible component, which is isomorphic to $\P^{1}$, meets other components
     in at least 3(resp. 2) points.
     \end{itemize}
     \end{defn}
     For an arbitrary base scheme, we define a (semi-)stable curve as follows.
     \begin{defn}
     A proper flat morphism of relative dimension 1 of schemes
     \\ $\pi : X \to S$ is a (semi-)stable curve
     if
     every geometric fiber of $\pi$ is
     a (semi-)stable curve in the sense of definition 2.1.
     \end{defn}
     In this paper, we assume $\pi : X \to C$ is a generically smooth semi-stable fibration of a proper smooth surface to a proper smooth curve
     over a field $k$ unless it is stated otherwise.
     \begin{defn}
     For a generically smooth semi-stable fibration $\pi : X \to C$ defined over a 
     field of positive characteristic, the generic $p$-rank of $\pi$ is the $p$-rank of a geometric generic fiber of $\pi$.
     \end{defn} 
     \subsection{Self duality of $B\omega ^{1}$}
     Let $k$ be a perfect field of positive characteristic and $\pi : X \to C$ be a generically smooth semi-stable curve.
     Let $\omega^{1} _{X/C}$ be the relative dualizing line bundle for $\pi$.
     There is the canonical inclusion
     $i : \Omega^{1} _{X/C} \hookrightarrow \omega^{1} _{X/C}$. At a relative smooth point for $\pi$, $i$ is an isomorphic.
     On the other hand, at a relative singular point, where \'{e}tale locally $\pi$ is given by
     $$ \Spec k[x,y,t]/(xy-t) \to \Spec k[t],$$ $\omega^{1} _{X/C}$
     is a free module of rank 1 generated by
     $dx/x = -dy/y$ and $\Omega^{1} _{X/C}$ is a submodule of $\omega^{1} _{X/C}$ generated by $dx$ and $dy$ via $i$.
     Composing with the inclusion $i$, we have the differential morphism $d : \mathcal{O} _{X} \to \omega^{1} _{X/C}$.
     When $F _{X/C}$ is the relative Frobenius morphism for $\pi$,  $\pi$
     $$ \begin{diagram}
     X & \rTo ^{F_{X/C}} & X^{p} &
     \rTo  & X \\
      & \rdTo _{\pi} & \dTo > {\pi ^{p}}& & \dTo >{\pi} \\
      & & C & \rTo ^{F_{C}} & C,\\
      \end{diagram}$$
      $d: F_{X/C*} \mathcal{O}_{X} \to F_{X/C*} \omega^{1} _{X/C}$ is $\mathcal{O}_{X^{p}}$-linear.
      The kernel of $d$ is the image of $F_{X/C}^{*} : \mathcal{O}_{X^{p}} \hookrightarrow F_{X/C*} \mathcal{O}_{X}$
      and the image of $d$ is denoted by $B^{1} \omega _{X/C}$ or $B^{1} \omega$.
      $B \omega ^{1}$ is flat over $\mathcal{O} _{C}$.
      Let $U \hookrightarrow X$ be the smooth locus for $\pi$. The usual Cartier isomorphism
      $$C :  \Omega^{1} _{U/C}/ B^{1}\Omega _{U/C} \rightarrow \Omega _{U^{p} / C}$$
      is extended to an isomorphism
      $$ C :  \omega ^{1} _{X/C}/ B^{1}\omega \to \omega^{1} _{X^{p}/C}. \textrm{\cite{I2},p.381}$$
      Using this Cartier morphism, we have an $\mathcal{O}_{X^{p}}$-linear paring
      $$F_{X/C *} \mathcal{O}_{X} \otimes  F_{X/C *} \omega ^{1} _{X/C} \to \omega ^{1} _{X^{p}/C}, \
      (\alpha, \omega) \mapsto C(\alpha \omega).$$
      This pairing induces a pairing
      $$ (F_{X/C *} \mathcal{O}_{X} / \mathcal{O}_{X^{p}}) \otimes  B^{1}\omega \to \omega ^{1} _{X^{p}/C}.$$
      On $U$, this pairing gives a perfect self duality.
      In particular, we have
     $$ B^{1} \Omega _{U/C} \simeq \underline{Hom}(B^{1} \Omega _{U/C}, \Omega ^{1} _{U^{p} /C}).$$
     \begin{prop}
     If $X$ is a smooth surface over a perfect field $k$ which admits a semi-stable
     fibration, $\pi :X \to C$, to a smooth curve $C$ over $k$, then
     $$B^{1} \omega _{X/C} \simeq \underline{Hom}(B^{1} \omega _{X/C}, \omega ^{1} _{X^{p}
     /C}).$$
     \end{prop}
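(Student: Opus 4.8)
The plan is to exploit the two short exact sequences that the Cartier setup produces on $X^p$ and to dualize one of them into the other. Writing $F=F_{X/C}$ and $D(-)=\underline{Hom}_{\mathcal{O}_{X^p}}(-,\omega^{1}_{X^p/C})$, the kernel/image description of $d$ gives
$$0 \to \mathcal{O}_{X^p} \xrightarrow{F^{*}} F_{*}\mathcal{O}_X \xrightarrow{d} B^{1}\omega \to 0, \qquad (\mathrm{I})$$
while the extended Cartier isomorphism $\omega^{1}_{X/C}/B^{1}\omega \simeq \omega^{1}_{X^p/C}$ gives
$$0 \to B^{1}\omega \to F_{*}\omega^{1}_{X/C} \xrightarrow{C} \omega^{1}_{X^p/C} \to 0. \qquad (\mathrm{II})$$
The whole point is that $D$ carries $(\mathrm{I})$ onto $(\mathrm{II})$, forcing $D(B^{1}\omega)\simeq B^{1}\omega$.

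First I would record the relative duality for the finite morphism $F$. Since $\pi$ and $\pi^{p}$ are semi-stable, hence Gorenstein, curve fibrations over the regular base $C$, the relative dualizing sheaves $\omega^{1}_{X/C}$ and $\omega^{1}_{X^p/C}$ are invertible and satisfy $F^{!}\omega^{1}_{X^p/C}\simeq\omega^{1}_{X/C}$. Grothendieck duality for the finite morphism $F$ then yields a natural isomorphism $D(F_{*}\mathcal{O}_X)=\underline{Hom}(F_{*}\mathcal{O}_X,\omega^{1}_{X^p/C})\simeq F_{*}\omega^{1}_{X/C}$, which is exactly the isomorphism carried by the Cartier pairing $(\alpha,\omega)\mapsto C(\alpha\omega)$ of the preceding discussion. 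Note that $F$ is finite but \emph{not} flat at the relative singular points: \'etale locally $X^p$ is the normal, non-regular $A_{p-1}$ hypersurface $\{xy=s^{p}\}$. The content of this step is therefore that the Cartier pairing stays perfect there, extending the self-duality already known on the relative smooth locus $U^p$ (where $F$ is finite flat). This is where the argument really happens, and it is the step I expect to be the main obstacle.

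Next I would apply the left-exact functor $D$ to $(\mathrm{I})$, obtaining
$$0 \to D(B^{1}\omega) \to D(F_{*}\mathcal{O}_X) \xrightarrow{(F^{*})^{\vee}} D(\mathcal{O}_{X^p})=\omega^{1}_{X^p/C},$$
and under the identification $D(F_{*}\mathcal{O}_X)\simeq F_{*}\omega^{1}_{X/C}$ the map $(F^{*})^{\vee}$ becomes a map $\delta:F_{*}\omega^{1}_{X/C}\to\omega^{1}_{X^p/C}$. Tracing the pairing gives $\delta(\omega)=C(F^{*}(1)\,\omega)=C(\omega)$, so $\delta$ is precisely the Cartier surjection $C$ of $(\mathrm{II})$. (Equivalently, $\delta$ and $C$ agree on the dense open $U^p$ by the classical fact that the Cartier operator is the duality transpose of Frobenius, hence on all of $X^p$ since $\omega^{1}_{X^p/C}$ is torsion-free.) Consequently
$$\underline{Hom}(B^{1}\omega,\omega^{1}_{X^p/C})=D(B^{1}\omega)=\ker\delta=\ker C=B^{1}\omega,$$
the last equality being the left-hand part of $(\mathrm{II})$, which is the asserted self-duality.

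Finally, a word on robustness and on the obstacle. This dualization needs no reflexivity of $B^{1}\omega$ as input; reflexivity drops out a posteriori, since $\underline{Hom}(-,\omega^{1}_{X^p/C})$ always lands in a reflexive sheaf ($\omega^{1}_{X^p/C}$ being invertible on the normal surface $X^p$). One could instead attempt the codimension-two route: $D(B^{1}\omega)$ is reflexive, $X^p\setminus U^p$ is a finite set, the natural map $B^{1}\omega\to D(B^{1}\omega)$ is an isomorphism over $U^p$, and reflexive sheaves on a normal surface are determined by their restriction to the complement of a codimension-two set. But that route stalls at showing $B^{1}\omega$ is itself $S_{2}$ at the $A_{p-1}$ points: the local-cohomology sequence of $(\mathrm{I})$ gives $H^{0}_{\mathfrak m}(B^{1}\omega)=0$ for free, whereas the vanishing of $H^{1}_{\mathfrak m}(B^{1}\omega)$ amounts to the injectivity of $H^{2}_{\mathfrak m}(\mathcal{O}_{X^p})\to H^{2}_{\mathfrak m}(F_{*}\mathcal{O}_X)$, the same local statement about Frobenius at the relative singular points. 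Thus in either approach the decisive point is the behaviour of the Cartier/Frobenius pairing at the non-flat locus of $F$.
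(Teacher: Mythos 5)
Your argument is correct, but it reaches the self-duality by a genuinely different route than the paper. The paper proves perfectness of the pairing by a direct \'{e}tale-local computation at a node: with $A=k[x,y]$ and $A^{p}=k[x^{p},y^{p},xy]$ it decomposes $B^{1}\omega=A/A^{p}=\bigoplus_{i=1}^{p-1}\langle x^{i},y^{p-i}\rangle$ as $A^{p}$-modules and checks by hand that the pairing places $\langle x^{i},y^{p-i}\rangle$ and $\langle x^{p-i},y^{i}\rangle$ in perfect duality. You instead argue formally: Grothendieck duality for the finite, non-flat morphism $F_{X/C}$, combined with $F^{!}\omega^{1}_{X^{p}/C}\simeq\omega^{1}_{X/C}$ (which follows from $\pi^{!}=F^{!}\circ(\pi^{p})^{!}$ since both fibrations are Gorenstein), gives $\underline{Hom}(F_{*}\mathcal{O}_{X},\omega^{1}_{X^{p}/C})\simeq F_{*}\omega^{1}_{X/C}$; dualizing $(\mathrm{I})$ then identifies $D(B^{1}\omega)$ with $\ker\delta$, and $\delta$ agrees with the Cartier operator on the dense open $U^{p}$ and hence everywhere because the target is torsion-free, so $D(B^{1}\omega)=\ker C=B^{1}\omega$ by the extended Cartier sequence $(\mathrm{II})$. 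The step you flag as the main obstacle is in fact already disposed of by this comparison-on-a-dense-open argument, so your proof does close; even if one only trusts the duality identification up to a unit, the conclusion survives because $\ker\delta$ and $B^{1}\omega$ are both saturated subsheaves of $F_{*}\omega^{1}_{X/C}$ (their quotients embed in the invertible $\omega^{1}_{X^{p}/C}$) that agree on $U^{p}$. As for what each approach buys: the paper's computation is elementary and self-contained, and it exhibits the explicit splitting of $B^{1}\omega$ into rank-one maximal Cohen--Macaulay modules over the $A_{p-1}$ point, which makes the self-duality visible component by component; your argument is coordinate-free, shows that perfectness at the nodes is forced by the exactness of the Cartier sequence rather than verified ad hoc, and would extend to situations (worse Gorenstein singularities, higher relative dimension) where the hands-on computation is unavailable, at the cost of invoking duality for a finite non-flat morphism and the classical identification of the Cartier operator with the Grothendieck trace of Frobenius.
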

     \begin{proof}
     It's enough to check that the paring is perfect at the relative singular points in
     $X$. Let $x \in X$ be a relative singular point. \'{E}tale
     locally, we may assume $X=\Spec A$,
     $C=\Spec B$ where
     $$A=k[x,y,t]/(xy-t) \simeq k[x,y],\ B=k[t]$$
     and $\pi$ is the canonical morphism
     $$k[t] \to k[x,y,t]/(xy-t).$$
     Let
     $A^{p}= A \otimes _{B} (B,F_{B})$. Then $A^{p} =
     k[X,Y,t]/(XY-t^{p})$ and the relative Frobenius morphism is a $k$-algebra morphism $F_{A/B} :
     A^{p} \to A$ given by
     $$X \mapsto x^{p}, \ Y \mapsto y^{p} \textrm{ and }t \mapsto xy.$$
     We may regard $A^{p}$ is a $k$-subalgebra of $A$
     generated by $x^{p},y^{p},xy$. As an $A^{p}$-module,
     $B^{1}\omega= A/A^{p}$ is generated by
     $$x, x^{2}, \cdots ,
     x^{p-1}, y, \cdots , y^{p-1}.$$
     $B^{1} \omega$ is a torsion free
     $A^{p}$-module and the $\Frac (A^{p})$-dimension of $B^{1} \otimes
     _{A^{p}} \Frac (A^{p})$ is $p-1$.
     Therefore there are only $p-1$ obvious relations
     $$ t^{p-1}x = X y^{p-1}, \ t^{p-2}x^{2} =X y^{p-2}, \cdots ,
     t x^{p-1} = X y$$
     among the generators and we have an $A^{p}$-module decomposition
     $$B^{1} \omega = \oplus _{i=1}^{p-1} <x^{i},y^{p-i}>.$$
     On the other hand, $\omega ^{1} _{A/B}$ is a rank 1 free $A$-module generated by
     $dx/x = -dy/y$ and $\omega ^{1} _{A^{p}/B}$ is a rank 1 free
     $A^{p}$-module generated by $dX / X = -dY/Y$. The Cartier
     morphism is the $A^{p}$-linear morphism satisfying
     $$\begin{array}{ccc}

     dx/x & \mapsto & dX/X,   \\
     xdx/x & \mapsto & 0, \\
     & \vdots \\
     x^{p-1}dx/x & \mapsto & 0,\\
     ydx/x & \mapsto & 0, \\
     & \vdots \\
     y^{p-1} dx/x & \mapsto & 0.
     \end{array}$$

     In the above decomposition of $B^{1}\omega$, it's easy to see that the dual of $<x^{i},
     y^{p-i}>$ is $<x^{p-i},y^{i}>$ and that the
     pairing $B^{1}\omega \otimes B^{1}\omega \to \omega ^{1}
     _{X^{p}/C}$
     gives a perfect duality of the dual components of
     both sides. This proves the claim.
     \end{proof}
     \begin{cor}
     Let $k$ be a perfect field of positive characteristic.
     Let $\pi : X \to C$ be a semi-stable fibration of a proper smooth surface
     to a proper smooth curve over $k$. Let $M$ and $T$ be the free part and the torsion part of
     $R^{1} \pi _{*} B^{1} \omega _{X/C}$
     respectively and $N= R^{1} \pi _{*} \mathcal{O}_{X}$.
     Then there exists a exact sequence of coherent modules on $C$
     $$ 0 \to M^{*} \to F_{C}^{*} N \to N \to M \oplus T \to 0.$$
           \end{cor}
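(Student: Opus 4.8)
The plan is to deduce the four–term sequence from a single short exact sequence on $X^{p}$ pushed forward along $\pi^{p}$, the only non–formal input being relative duality combined with the self–duality of $B^{1}\omega$ proved in the Proposition.

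First I would record the exact sequence coming from the differential $d$. As noted above, $\ker d$ is the image of $F_{X/C}^{*}:\mathcal{O}_{X^{p}}\hookrightarrow F_{X/C*}\mathcal{O}_{X}$ and $\im d = B^{1}\omega$, so there is a short exact sequence of $\mathcal{O}_{X^{p}}$–modules
$$ 0 \to \mathcal{O}_{X^{p}} \xrightarrow{\,F_{X/C}^{*}\,} F_{X/C*}\mathcal{O}_{X} \xrightarrow{\ d\ } B^{1}\omega \to 0. $$
Next I would apply $R^{\bullet}\pi^{p}_{*}$. Since $F_{X/C}$ is finite, $R^{j}\pi^{p}_{*}(F_{X/C*}\mathcal{O}_{X})=R^{j}\pi_{*}\mathcal{O}_{X}$, which is $\mathcal{O}_{C}$ for $j=0$ and $N=R^{1}\pi_{*}\mathcal{O}_{X}$ for $j=1$. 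Since $F_{C}$ is finite flat and $X^{p}=X\times_{C,F_{C}}C$, flat base change gives $R^{j}\pi^{p}_{*}\mathcal{O}_{X^{p}}=F_{C}^{*}R^{j}\pi_{*}\mathcal{O}_{X}$, which is $\mathcal{O}_{C}$ for $j=0$ and $F_{C}^{*}N$ for $j=1$. The degree–$0$ map $\mathcal{O}_{C}\to\mathcal{O}_{C}$ is an isomorphism, so the long exact sequence collapses to
$$ 0 \to \pi^{p}_{*}B^{1}\omega \to F_{C}^{*}N \to N \to R^{1}\pi^{p}_{*}B^{1}\omega \to 0. $$
On the smooth curve $C$ a coherent sheaf is the direct sum of its torsion subsheaf and a locally free sheaf (the extension splits because a locally free sheaf on a smooth curve has no nontrivial extension by a torsion sheaf), so $R^{1}\pi^{p}_{*}B^{1}\omega\cong M\oplus T$ with $M$ the free part and $T$ the torsion part.

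It then remains to identify $\pi^{p}_{*}B^{1}\omega$ with $M^{*}$. For this I would use relative duality for the proper flat family of curves $\pi^{p}$, whose relative dualizing sheaf is $\omega^{1}_{X^{p}/C}$: for a coherent sheaf $\mathcal{G}$ flat over $C$ there is a canonical isomorphism
$$ \pi^{p}_{*}\,\underline{Hom}(\mathcal{G},\omega^{1}_{X^{p}/C}) \;\cong\; \underline{Hom}_{\mathcal{O}_{C}}(R^{1}\pi^{p}_{*}\mathcal{G},\mathcal{O}_{C}). $$
Taking $\mathcal{G}=B^{1}\omega$, which is flat over $C$, and substituting the self–duality $\underline{Hom}(B^{1}\omega,\omega^{1}_{X^{p}/C})\cong B^{1}\omega$ of the Proposition, the left side becomes $\pi^{p}_{*}B^{1}\omega$, so
$$ \pi^{p}_{*}B^{1}\omega \;\cong\; (R^{1}\pi^{p}_{*}B^{1}\omega)^{*} \;=\; (M\oplus T)^{*} \;=\; M^{*}, $$
because the torsion sheaf $T$ has $\underline{Hom}(T,\mathcal{O}_{C})=0$. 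Inserting this into the four–term sequence gives the claim.

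The hard part will be the duality step. One must apply the form of relative Grothendieck–Serre duality that yields the genuine $\mathcal{O}_{C}$–dual of the top direct image, so that $\pi^{p}_{*}B^{1}\omega$ comes out as $M^{*}$ rather than a sheaf correct only up to torsion; and one must know that $B^{1}\omega$ is flat over $C$ for the duality and base–change statements to apply, which is exactly the flatness recorded before the Proposition. A minor point needing justification is that the degree–$0$ connecting map $\mathcal{O}_{C}\to\mathcal{O}_{C}$ is an isomorphism; this may be checked directly on fibers, or deduced after the fact from the torsion–freeness of $\pi^{p}_{*}B^{1}\omega\cong M^{*}$, which leaves no room for the cokernel of that map.
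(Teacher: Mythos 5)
Your proposal is correct and follows essentially the same route as the paper: push forward the short exact sequence $0 \to \mathcal{O}_{X^{p}} \to F_{X/C*}\mathcal{O}_{X} \to B^{1}\omega \to 0$ along $\pi^{p}$, use finiteness of $F_{X/C}$ and flat base change along $F_{C}$ to identify the middle terms, and combine relative duality with the self-duality of $B^{1}\omega$ to get $\pi^{p}_{*}B^{1}\omega \cong (R^{1}\pi^{p}_{*}B^{1}\omega)^{*} = M^{*}$. The only difference is that you spell out the collapse of the $\mathcal{O}_{C}\to\mathcal{O}_{C}$ map and the vanishing of $\underline{Hom}(T,\mathcal{O}_{C})$ more explicitly than the paper does.
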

     \begin{proof}
     The exact sequence of coherent $\mathcal{O} _{X^{p}}$-modules
     $$ 0 \to \mathcal{O} _{X^{p}} \to F_{X/C*} \mathcal{O} _{X} \to B^{1} \omega _{X/C} \to 0$$
     gives a long exact sequence for the $\pi ^{p} _{*}$ functor
     $$ 0 \to \mathcal{O}_{C} \cong \mathcal{O}_{C} \to \pi^{p} _{*} B^{1}\omega _{X/C}
     \to R^{1} \pi^{p} _{*} \mathcal{O}_{X^{p}} \to R^{1} \pi^{p} _{*} F_{X/C*} \mathcal{O}_{X} \to R^{1} \pi^{p} _{*}
     B^{1} \omega _{X/C} \to 0.$$
     Because the Frobenius morphism of $C$ is finite flat, $R^{1} \pi ^{p} _{*} \mathcal{O}_{X^{p}} =
     F_{C} ^{*} R^{1} \mathcal{O}_{X}$. And the relative Frobenius morphism $F : X \to X^{p}$ is
     finite affine, so $R^{1} \pi ^{p} _{*} F_{X/C*} \mathcal{O}_{X} = R^{1} \pi _{*} \mathcal{O}_{X}$.
     By proposition 2.4, $B^{1} \omega _{X/C} \simeq \underline{Hom}(B^{1} \omega _{X/C},
     \omega _{X^{p}/C})$. Since $\pi$ is relative 1-dimensional, by the relative duality theorem
     $$\pi ^{p} _{*} B^{1} \omega _{X/C} = \underline{Hom}( R^{1} \pi ^{p} _{*} B^{1} \omega _{X/C} , \mathcal{O}_{C}).$$
     Therefore the claim follows.
     \end{proof}
     \subsection{Proof of the theorem}
     \noindent
     $\bf{Theorem \ 1.}$
     Let $\pi : X \to C$ be a non-isotrivial semi-stable fibration of proper smooth surface to a proper smooth curve over
     a field of positive characteristic. If the generic $p$-rank of $\pi$ is 0, then
     $F _{C} ^{n*} R^{1} \pi _{*} \mathcal{O}_{X}$ has a positive Harder-Narasimhan slope
     for a sufficiently large $n \in \N$. In particular, if the genus of $C$ is 0 or 1, $R^{1} \pi _{*} \mathcal{O}_{X}$
     has a positive Harder-Narasimhan slope.\\
     \begin{proof}
     The $n$-iterative relative Frobenius morphism in the diagram
     $$\begin{diagram}
     X & \rTo ^{F_{X/C}^{n}} & X^{p^{n}} & \rTo  & X\\
     & \rdTo & \dTo >{\pi ^{p^{n}}} & & \dTo >{\pi}\\
     & & C & \rTo ^{F ^{n} _{C}} & C
     \end{diagram}$$
     is the composition of relative Frobenius morphisms
     $$X \overset{F_{X/C}}{\to} X^{p} \overset{F_{X^{p}/C}}{\to} \cdots \to X^{p^{n-1}}
     \overset{F_{X^{p^{n-1}}/C}}{\longrightarrow} X^{p^{n}}.$$
     $F^{n}_{X/C}$ gives an exact sequence of coherent $\mathcal{O}_{X^{p^{n}}}$-modules
     $$(*) \ 0 \to \mathcal{O}_{X^{p^{n}}} \overset{F^{n*}}{\to} F^{n}_{*}\mathcal{O}_{X} \to E_{n} \to 0.$$
     Here $E_{n}$ is flat over $\mathcal{O}_{C}$ and
     $E_{1}=B^{1}\omega _{X/C}$.
     If we denote $N= R^{1}\pi _{*} \mathcal{O}_{X}$, $R^{1} \pi ^{p^{n}} _{*} \mathcal{O}_{X ^{p^{n}}} =
     F^{n*} _{C} N.$
     Let $\lambda _{n} : F^{n*}_{C} N \to N$ be the morphism induced by $F_{X/C}^{n*}$
     in $(*)$.
     Because the relative Frobenius morphism commutes with a base change and the Frobenius morphism
     of $C$ is flat, $\lambda _{n}$ is the composition of Frobenius pullbacks of \\
     $\lambda _{1} : F^{*} _{C} N \to N$,
     $$\lambda _{n} = \lambda _{1} \circ \cdots \circ F^{(n-2)*} _{C} \lambda _{1} \circ
     F^{(n-1)*} _{C} \lambda _{1} :
     F^{n*}_{C} N \to F^{(n-1)*}_{C} N \to \cdots \to F^{*} _{C} N \to N.$$
     On the other hand, the restriction of $(*)$ to a special fiber $X_{s}$ is
     $$ 0 \to \mathcal{O}_{X_{s}^{p^{n}}} \to F^{n} _{X_{s}/k(s)*} \mathcal{O}_{X_{s}} \to E_{n,s} \to 0.$$
     Furthermore we have the long exact sequence
     $$ \cdots \to H^{1}(\mathcal{O}_{X_{s} ^{p^{n}}}) \to H^{1}(\mathcal{O}_{X _{s}})
     \to H^{1}(E_{n,s}) \to 0.$$
     Since we have assumed that the generic $p$-rank of $\pi$ is 0, by the
     Grothendieck specialization theorem, for all $s \in C$, the $p$-rank of $X_{s}$ is 0.
     It follows that there exists $n \in \N$, such that
     $H^{1}(X_{s}, \mathcal{O}_{X _{s}^{p^{n}}}) \to H^{1}(X_{s}, \mathcal{O}_{X_{s}})$ is the zero morphism
     for all $s \in C$ such that $X_{s}$ is smooth. Hence $\dim H^{0}(X_{s}, E_{n,s})= \dim H^{1}(X_{s}, E_{n,s}) = \gen X_{s}=g$
     for such $s$.
     But by the semi-continuity theorem, $\pi ^{p^{n}} _{*} E_{n}$ and $R^{1} \pi ^{p^{n}} _{*} E_{n}$ are vector bundles of rank $g$.
     Because $N$ is also a vector bundle of rank $g$ on $C$, considering the exact sequence
     $$ 0 \to \pi ^{p^{n}} _{*} E_{n} \to F^{n*}_{C} N
     \overset{\lambda _{n}}{\to} N \to R^{1} \pi ^{p^{n}} _{*} E_{n} \to 0,$$
     $\lambda _{n}=0$.
     Now assume all the Harder-Narasimhan slopes of $F^{i} _{C}N$ are non-positive for all $i$.
     Let $M$ be the free part of $R^{1} \pi ^{p} _{*} B^{1} \omega _{X/C}$.
     Then $\pi ^{p} _{*}
     B^{1} \omega _{X/C} = M^{*}= \underline{Hom}_{\mathcal{O}_{C}}(M,\mathcal{O}_{C})$.(Corollary 2.4)
     All the Harder-Narasimhan
     slopes of $F^{i*}_{C} M^{*}$ are non-positive since $F^{i*} _{C} M^{*}$ is a subbundle of
     $F^{i*} _{C} N$, so
     all the Harder-Narasimhan slopes of
     $F^{i*} _{C} M$ are non-negative.  Let us consider an exact sequence
     $$ 0 \to \im \lambda _{i-1}/ \im \lambda _{i} \to N / \im \lambda _{i} \to N/ \im \lambda _{i-1} \to 0.
     (i \geq 2)$$
     We can also think of the free part of the above exact sequence
     $$0 \to V' _{i} \to V_{i} \to V''_{i} \to 0.$$
     Here $V_{i}$ is the free part of $N / \im \lambda _{i}$ and $V'' _{i}$ is the free part of
     $N / \im \lambda _{i-1}$. $V' _{i}$ is the saturation of the free part of $\im \lambda _{i-1} /
     \im \lambda _{i}$ in $V _{i}$. We can see $V'' _{2} = M$ is of non-negative degree
     by the assumption. On the other hand,
     since $V' _{i}$ is a saturation of a quotient bundle of $F^{(i-1)*} M$, it is also of non-negative degree.
     Therefore, by induction $V _{i}$ is of non-negative degree. Since $\lambda _{n} =0$ for a
     sufficiently large $n$, the degree of $V_{n} = N$ is non-negative. 
     But since $\pi$ is non-isotrivial, $\deg N$ is strictly negative.\cite{SZ1},p.173
     It is contradiction, so $F^{i*}N$ has a positive Harder-Narasimhan slope for some $i$.
     If the genus of the base $C$ is 0 or 1, the Frobenius pull back $F_{C}^{*}$ preserves the
     semi-stability of vector bundles, so $R^{1} \pi _{*} \mathcal{O}_{X}$ has a positive Harder-Narasimhan
     slope.
     \end{proof}
     \begin{rem}
     The failure of the semi-positivity theorem for Moret-Bailly's example in the introduction is a special case of
     Theorem 2. Using Theorem 2, we may construct a lot of counterexamples of the semi-positivity theorem.
     Assume $k$ is algebraically closed of positive characteristic.
     In $\mathcal{M} _{g,k}$, the moduli space of smooth proper curves of genus $g$ over $k$, the $p$-rank
     0 strata is a closed subscheme which is purely $2g-3$ dimensional.\cite{FV},p.120
     Let $P$ be a 1 dimensional point in the $p$-rank 0
     strata. By the semi-stable reduction theorem,\cite{DE},p.3 
     there is a semi-stable fibration $\pi : X \to C$ such that the morphism
     $C \to \mathcal{M}_{g,k}$ induced by $\pi$ sends the generic point of $C$ to $P$.
     Since the generic $p$-rank of $\pi$ is 0,
     for a suitable Frobenius base change $\pi ^{p^{n}} : X ^{p^{n}} \to C$, $R^{1} \pi ^{p^{n}} _{*} \mathcal{O}
     _{X ^{p^{n}}}$ has a positive slope by Theorem 1. $X^{p^{n}}$ may contain isolated singularities.
     But the composition of $\pi ^{p^{n}}$ and the desingularization $X^{(n)} \to X^{p^{n}}$,
     $\pi ^{(n)} : X^{(n)} \to C$ is a semi-stable fibration and
     $R^{1} \pi ^{(n)} _{*} \mathcal{O} _{X^{(n)}} = R^{1} \pi ^{p^{n}} _{*} \mathcal{O}_{X^{p^{n}}}$.
     Hence $\pi ^{(n)}$ is a counterexample to the semi-positivity theorem.
     \end{rem}
     \begin{cor}
     Let $F$ be a number field and
     suppose a semi-stable fibration $\pi : X \to C$ is defined over $F$.
     There is an integral model of $\pi$, $\pi _{A} : X_{A} \to C_{A}$ defined over
     $\Spec A$, an affine open set of $\Spec \mathcal{O}_{F}$. Let $\pi _{\upsilon} : X _{\upsilon} \to
     C_{\upsilon}$ be the reduction of $\pi _{A}$ at a place $\upsilon \in \Spec A$.
     If the genus of $C$ is 0 or 1, then the generic $p$-rank of $\pi _{\upsilon}$ is not 0
     for all but finitely many places $\upsilon$.
     \end{cor}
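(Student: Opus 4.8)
The plan is to combine Theorem 1 with the characteristic $0$ semi-positivity theorem of Xiao, using the upper semicontinuity of the Harder-Narasimhan polygon across the arithmetic family over $\Spec A$ to pass from one to the other. First I would spread out all the relevant structures. After possibly shrinking $A$, the family of curves $C_A \to \Spec A$ has all fibers of genus $0$ or $1$, the total space $X_A$ is smooth over $A$, and $\pi_A$ is a semi-stable fibration; moreover $N_A := R^1\pi_{A*}\mathcal{O}_{X_A}$ is a vector bundle of rank $g$ on $C_A$, since $h^1(\mathcal{O}_{X_s}) = g$ is constant and $R^1$ is the top relative cohomology, so its formation commutes with base change. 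In particular $N_\upsilon = R^1\pi_{\upsilon*}\mathcal{O}_{X_\upsilon}$ is the restriction of $N_A$ to the closed fiber $C_\upsilon$, while $N_F$ is its restriction to the generic fiber $C_F$.

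Next I would pin down the generic Harder-Narasimhan polygon. The generic fiber $\pi_F : X_F \to C_F$ is a semi-stable fibration over the characteristic $0$ field $F$; base changing to $\C$ and applying Xiao's theorem, every quotient bundle of $\pi_{F*}\omega_{X_F/C_F}$ has non-negative degree, so all Harder-Narasimhan slopes of $\pi_{F*}\omega_{X_F/C_F}$ are non-negative. By relative duality $N_F \cong \underline{Hom}(\pi_{F*}\omega_{X_F/C_F}, \mathcal{O}_{C_F})$, hence all Harder-Narasimhan slopes of $N_F$ are non-positive. Thus the generic polygon $P_0$ of $N_A$ along $\Spec A$ lies entirely in the region of non-positive slopes, and in particular $\mu_{\max}(N_F) \le 0$.

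The key input is then the upper semicontinuity of the Harder-Narasimhan polygon: for the vector bundle $N_A$ on the family $C_A \to \Spec A$, the function sending $s$ to the Harder-Narasimhan polygon of $N_A$ restricted to the fiber over $s$ is upper semicontinuous, and the generic polygon $P_0$ is the minimal one (Shatz's specialization theorem). Since $\Spec A$ is one-dimensional and integral and the degrees are bounded, the jump locus $\{ s : \text{HNP}(N_s) \neq P_0 \}$ is a proper closed subset, hence a finite set of closed points. Therefore, for all but finitely many places $\upsilon$, the polygon of $N_\upsilon$ equals $P_0$, so $N_\upsilon$ has no positive Harder-Narasimhan slope.

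To conclude, I would observe that for all but finitely many $\upsilon$ the reduction $\pi_\upsilon : X_\upsilon \to C_\upsilon$ is again a non-isotrivial semi-stable fibration of a proper smooth surface to a proper smooth curve of genus $0$ or $1$, since smoothness of $X_\upsilon$, semi-stability of $\pi_\upsilon$, and non-constancy of the induced moduli map $C \to \mathcal{M}_{g}$ are all open conditions that persist outside finitely many fibers (this is where the non-isotriviality hypothesis of Corollary 2.8 is used). If the generic $p$-rank of such a $\pi_\upsilon$ were $0$, Theorem 1 would force $N_\upsilon$ to have a positive Harder-Narasimhan slope, contradicting the previous paragraph; hence the generic $p$-rank of $\pi_\upsilon$ is not $0$ for all but finitely many $\upsilon$. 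The main obstacle, and the point I would treat most carefully, is the semicontinuity and minimality of the Harder-Narasimhan polygon across the change of characteristic, together with the boundedness that makes the jump locus closed; I would also double-check the duality identification of $N_F$ so that the characteristic $0$ polygon is genuinely non-positive, since the whole argument rests on comparing that polygon with the positive slope produced by Theorem 1.
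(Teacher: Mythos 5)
Your proposal is correct and follows essentially the same route as the paper: establish that $R^{1}\pi_{F*}\mathcal{O}_{X_F}$ has no positive Harder--Narasimhan slope in characteristic $0$ (via Xiao's semi-positivity theorem and relative duality), spread the Harder--Narasimhan data over a non-empty open subset of $\Spec A$ so that $R^{1}\pi_{\upsilon*}\mathcal{O}_{X_\upsilon}$ has no positive slope for almost all $\upsilon$, and then apply the contrapositive of Theorem 1. Your version is merely more explicit than the paper's (which compresses the specialization step into one sentence), and you rightly note that the non-isotriviality needed to invoke Theorem 1 is implicit in the corollary's hypotheses.
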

     \begin{proof}
     Since the harder-Narasimhan filtration of $R^{1} \pi _{A*} \mathcal{O}_{X_{A}}$ on the generic fiber of $\C _{A} \to \Spec A$ extends to
     a non-empty open set of $\Spec A$,  $R^{1} \pi _{\upsilon *}
     \mathcal{O}_{X _{\upsilon}}$ has no positive Harder-Narasimhan slope for almost all $\upsilon \in 
     \Spec A$. Therefore the generic $p$-rank of
     $\pi _{\upsilon}$ is not 0 by Theorem 1.
     \end{proof}

     Let $\mathcal{M}
     _{g, \mathcal{O} _{F}}$ be the moduli space of proper smooth curves over $\Spec \mathcal{O}_{F}$.
     $\mathcal{M} _{g,F}$, the moduli space over $F$, is the generic fiber of
     $\mathcal{M} _{g, \mathcal{O}_{F}} \to \Spec \mathcal{O}_{F}$.
     When $P$ is a geometrically irreducible point of $\mathcal{M} _{g,F}$,
     the closure of $P$ in $\mathcal{M} _{g, \mathcal{O}_{F}}$
     has a geometrically irreducible reduction at almost all places $\upsilon$. Let us denote the generic point
     of the reduction at $\upsilon$ by $P _{\upsilon}$. $P _{\upsilon}$ is contained in a
     $p$-rank strata in $\mathcal{O} _{g, \bar{k_{\upsilon}}}$, the moduli space over the residue field at $\upsilon$.
     We may ask the distribution of the $p$-ranks of $P _{\upsilon}$. Serre's ordinary reduction conjecture is
     a problem for closed points in the moduli space.
     In the language of the moduli space, Corollary 2.7 can be stated as follow.
     \begin{cor}
     Suppose that $\pi : X \to C$ is a non-isotrivial
     semi-stable fibration of base genus 0 or 1 defined over a number field $F$,
     and that $U \subseteq C$ is the smooth locus of $\pi$.
     $\pi$ defines a non-constant morphism $f:U \to \mathcal{M} _{g,F}$. Let $P$ be the image of
     the generic point of $U$ under $f$. Then the reduction $P _{\upsilon}$ is not contained in
     the $p$-rank 0 strata for almost all $\upsilon$.
     \end{cor}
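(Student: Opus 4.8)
The plan is to read Corollary 2.8 as nothing more than a translation of Corollary 2.7 into the language of the coarse moduli space, so that the entire content of the proof is the compatibility of the moduli classification with reduction modulo $\upsilon$. First I would fix integral models. After shrinking to an affine open $\Spec A \subseteq \Spec \mathcal{O}_F$, the fibration $\pi$ extends to $\pi_A : X_A \to C_A$ with smooth locus $U_A \subseteq C_A$, and the classifying morphism $f$ extends to $f_A : U_A \to \mathcal{M}_{g,A}$. By construction the point $P = f(\eta_U)$, the image of the generic point $\eta_U$ of $U$, classifies the isomorphism class over $k(\eta_U)$ of the geometric generic fiber of $\pi$.

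Next I would set up the reduction so that it is compatible with these structures. Shrinking $A$ further, I may assume that $U_A \to \Spec A$ has geometrically irreducible fibers; this is possible because $P$ is geometrically irreducible, so by the discussion preceding the statement its reduction $P_\upsilon$ is geometrically irreducible for almost all $\upsilon$, and correspondingly $U_\upsilon$ is irreducible with generic point $\eta_{U_\upsilon}$ equal to the reduction of $\eta_U$. With this arranged, the key identity is $P_\upsilon = f_\upsilon(\eta_{U_\upsilon})$, where $f_\upsilon : U_\upsilon \to \mathcal{M}_{g,\bar{k_\upsilon}}$ is the reduction of $f_A$. This follows from the compatibility of the classifying morphism with base change to the residue field, combined with the agreement of generic points under reduction secured by the geometric irreducibility above.

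The crucial translation is then purely formal. The point $f_\upsilon(\eta_{U_\upsilon})$ classifies the geometric generic fiber of $\pi_\upsilon$, and by the very definition of the $p$-rank stratification of $\mathcal{M}_{g,\bar{k_\upsilon}}$ this point lies in the $p$-rank $0$ stratum if and only if that geometric generic fiber has $p$-rank $0$, i.e. if and only if the generic $p$-rank of $\pi_\upsilon$ is $0$ in the sense of Definition 2.3. Combining this with the identity $P_\upsilon = f_\upsilon(\eta_{U_\upsilon})$, the point $P_\upsilon$ lies in the $p$-rank $0$ stratum precisely when the generic $p$-rank of $\pi_\upsilon$ is $0$.

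Finally I would invoke Corollary 2.7. Since $C$ has genus $0$ or $1$ and $\pi$ is non-isotrivial --- equivalently $f$ is non-constant --- that corollary gives that the generic $p$-rank of $\pi_\upsilon$ is nonzero for all but finitely many $\upsilon$; hence $P_\upsilon$ avoids the $p$-rank $0$ stratum for almost all $\upsilon$, as asserted. I expect the main obstacle to be precisely the spreading-out step that makes reduction commute with the classifying morphism and with passage to generic points, namely the verification of $P_\upsilon = f_\upsilon(\eta_{U_\upsilon})$; once this base-change compatibility and the geometric irreducibility of the reductions of $U$ are in hand, everything else reduces to a direct appeal to Corollary 2.7 together with the definition of the stratification.
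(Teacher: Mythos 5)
Your proposal is correct and takes essentially the same route as the paper: the paper gives no separate proof of Corollary 2.8 at all, simply asserting (in the paragraph preceding it) that it is Corollary 2.7 restated in the language of the moduli space, which is exactly the translation you carry out. Your write-up merely makes explicit the routine spreading-out and base-change compatibilities ($P_\upsilon = f_\upsilon(\eta_{U_\upsilon})$, geometric irreducibility of the reductions) that the paper leaves implicit.
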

     \begin{rem}
     If the fiber genus of $\pi : X \to C$ is 2, Corollary 2.7 holds for arbitrary base $C$.
     Ekedahl showed that if $\pi : X \to C$ is a generically super-singular semi-stable fibration of fiber genus 2,
     then there exist a finite \'{e}tale cover $f: D \to C$ and a morphism $g : D \to \P ^{1}$ such that
     $\pi _{f} : X \times _{C} D \to D$ is isomorphic to $\pi _{M , g} : X_{M} \times _{\P ^{1}} D \to D$ where
     $\pi _{M} : X_{M} \to \P ^{1}$ is Moret-Bailly's fibration.\cite{EK},p.173
     Since a pullback by a finite separable morphism
     of curves preserves the semi-stability of vector bundles, $R^{1} \pi _{*} \mathcal {O}_{X}$ has
     a positive Harder-Narasimhan slope. Considering
     the construction of a semi-stable fibration from a 1-dimensional point in the moduli space(Remark 2.6)
     and the Grothendieck specialization theorem,
     Corollary 2.8 holds for an arbitrary
     non-closed point in $\mathcal{M} _{2,F}$.
     \end{rem}

     It is natural to expect that Corollary 2.8 holds for an arbitrary non closed point in $\mathcal{M} _{g,F}$ for
     any $g$, or equivalently that Corollary 2.7 holds for arbitrary base curve $C$.
     If the result of Proposition 2.12 in \cite{J} is valid over a filed of characteristic 0,
     i.e. the slope 0 part of $R^{1} \pi _{*} \mathcal{O}_{X}$ is potentially trivial,
     this expectation is valid.
     Indeed, in the situation of Corollary 2.7 without the assumption of the base genus, if 
     the slope 0 part, $(R^{1}
     \pi _{*} \mathcal{O} _{X}) _{0}$ is potentially trivial,
     $(R^{1} \pi _{\upsilon} \mathcal{O} _{X _{\upsilon}})_{0}$ is strongly semi-stable
     for almost all $\upsilon \in \Spec A$. On the other hand,
     by \cite{SB},p.660, the negative slope part, $F_{\upsilon} ^{n*} (R^{1} \pi _{*} \mathcal{O}_{X _{\upsilon}})_{-}$ has only negative
     Harder-Narasimhan slopes for any $n$ if the residue characteristic of $\upsilon$ is sufficiently large.
     Therefore the claim follows.

     \begin{question}
     For an arbitrary geometrically irreducible non-closed point $P$ in $\mathcal{M}_{g,F}$,
     is the $p$-rank of the reduction $P _{\upsilon}$ nonzero
     for almost all $\upsilon \in \Spec \mathcal{O}_{F}$?
     \end{question}

     \end{document}